\def\thefigure{\thesection.\@arabic\c@figure}
\def\fps@figure{h,t}
\def\thetable{\thesection.\@arabic\c@table}
\def\fps@table{h, t}
\DeclareMathAlphabet\mathbfcal{OMS}{cmsy}{b}{n}
\newtheorem{theorem}{Theorem}
\newtheorem{corollary}[theorem]{Corollary}
\newtheorem{definition}[theorem]{Definition}
\newtheorem{lemma}[theorem]{Lemma}
\newtheorem{proposition}[theorem]{Proposition}
\newtheorem{remark}[theorem]{Remark}
\numberwithin{theorem}{section}
\numberwithin{equation}{section}
\newcommand{\1}{{\bf 1}}
\newcommand{\Ci}{\mathcal{C}^\infty}
\newcommand{\ee}{{\rm e}}
\newcommand{\id}{{\rm id}}
\newcommand{\ie}{{\rm i}}
\newcommand{\dual}[2]{\langle #1, #2\rangle}
\newcommand{\CC}{{\mathbb C}}
\newcommand{\RR}{{\mathbb R}}
\newcommand{\Bc}{{\mathcal B}}
\newcommand{\Cc}{{\mathcal C}}
\newcommand{\Hc}{{\mathcal H}}
\newcommand{\Lc}{{\mathcal L}}
\newcommand{\Sc}{{\mathcal S}}
\newcommand{\Vc}{{\mathcal V}}
\newcommand{\Xc}{{\mathcal X}}
\newcommand{\Yc}{{\mathcal Y}}
\title[Distribution vectors in Lie group representations]{Invariant Hilbert spaces of distribution vectors in Lie group representations}
\author{Ingrid Belti\c t\u a}
\author{Daniel Belti\c t\u a}
\address{Institute of Mathematics ``Simion Stoilow'' of the Romanian Academy,
	P.O. Box 1-764, Bucharest, Romania}
\email{Ingrid.Beltita@imar.ro, ingrid.beltita@gmail.com}
\email{Daniel.Beltita@imar.ro, beltita@gmail.com}
\keywords{nilpotent Lie group; smooth vector; reproducing kernel}
\subjclass[2020]{Primary 22E27; Secondary 17B30, 22E25, 46F10}
\thanks{This work was supported by a grant of the Ministry of Research, Innovation and Digitization, CNCS--UEFISCDI, project number PN-IV-P1-PCE-2023-0264, within PNCDI IV}
\begin{document}
	
	\parskip=5pt

	\begin{abstract}
	For every unitary irreducible representation of a 
	Lie group we prove that the representation Hilbert space is the only nonzero invariant Hilbert space of distribution vectors. 
	\end{abstract}

	\maketitle

	\section{Introduction}
	
	Hilbert subspaces of topological vector spaces in the sense of Laurent Schwartz \cite{Sch64} provide an abstract version of reproducing kernels of function spaces and they thus play an important role in many areas of mathematical physics, analysis, and representation theory of Lie groups,  
	as shown for instance in \cite{Ne00}. 
	There has been a recent interest in uniqueness of Hilbert subspaces of spaces of tempered distributions that are invariant to modulations and translations, partly motivated by problems related to time-frequency analysis and modulation spaces, see \cite{ToGuMaRa21}, \cite{BaMoPi24}, \cite{RaToVi25}.

	In this paper we prove an abstract uniqueness theorem for Lie group representations, which recovers some of the preceding results in the special case  of  Schr\"odinger representations of the Heisenberg groups. 
	Specifically, we show that if $\pi\colon G\to\Bc(\Hc)$ is an irreducible unitary  representation of a  
	Lie group $G$, 
	then $\Hc$ is the only continuously embedded Hilbert space into the space of 
	distribution vectors $\Hc^{-\infty}$ which is nonzero and is invariant to the action of $G$ 
	(Theorem~\ref{unique}). 
	In section~\ref{nilpotent} we give a version of that result replacing 
	the action of the group 
	by  
	the action of the Schwartz algebra on a nilpotent Lie group. 
	 
	As explained in \cite{ToGuMaRa21}, the minimality properties of the Feichtinger algebra provided the initial motivation for uniqueness results on Hilbert spaces that are invariant to translations and modulations. 
	 In that spirit, one used the realization of the Hilbert space as $L^2(\RR^d)$, 
 which facilitated the techniques of distribution theory and Hermite functions.
	 Our approach is completely different, 
	 inasmuch as we rather rely on some basic results on representations of 
	 Lie groups  
	 and thus, in Corollary~\ref{RaToVi25_Thm0.3}, we provide a new perspective on  \cite[Thm. 0.3]{RaToVi25}.

	\section{Preliminaries}

	The framework of the present paper is provided by the notions of smooth vectors and distribution vectors with respect to unitary representations of Lie groups. 
	Details of this theory can be found in \cite{Ne10}, \cite{BeBe10}, \cite{BeBe11}, \cite{BeBeGa18}, 
	and the references therein. 
	
		For every locally convex, topological $\CC$-vector space $\Yc$, we denote by $\Lc(\Yc)$ the associative algebra of continuous linear operators on $\Yc$. 
		and by $\Yc'$ the space of continuous antilinear functionals on $\Yc$ endowed with its $w^*$-topology, i.e., the topology of pointwise convergence. 
		If $\langle\cdot,\cdot\rangle\colon\Yc'\times\Yc\to\CC$ is the corresponding sesquilinear duality pairing (antilinear in its second variable), 
		then for every $T\in\Lc(\Yc)$ there is a unique operator $T'\in\Lc(\Yc')$ satisfying 
		$\langle T'\eta,y\rangle=\langle \eta,Ty\rangle$ for all $\eta\in\Yc'$ and $y\in\Yc$. 
		If $\Xc$ is a Banach space, since the continuous linear operators on $\Xc$  are precisely the bounded operators, we denote $\Bc(\Xc):=\Lc(\Xc)$.
		
		We  need a version of Schur's lemma for unbounded intertwining operators. 
		
		\begin{definition}\normalfont 
		Let $G$ be a locally compact group and  $\pi\colon G \to \Bc(\Hc_j)$, $j=1, 2$, two unitary representations of the group $G$.
		An unbounded operator $S$ from $\Hc_1$ to $\Hc_2$ with domain 
		$D(S) \subseteq \Hc_1$ is an \emph{intertwining operator} for the representations $\pi_1$ and $\pi_2$ if 
		for every $g \in G$ and 
		$x\in D(S)$ we have $\pi(g)x\in D(S)$ and
		$$ S \pi_1(g) x  =\pi_2(g) Sx. $$
\end{definition}
 
 \begin{lemma}\label{Schur}
Let $G$ be a locally compact group and $\pi\colon G \to \Bc(\Hc_j)$, $j=1, 2$, two unitary representations of the group $G$ with $\pi_1$  irreducible. 
Assume that $S$ is non-zero closable intertwining operator for $\pi_1$ and $\pi_2$. 
Then the closure $\overline{S}$ of $S$ is a bounded operator 
from $\Hc_1$ to $\Hc_2$ and there is $\lambda>0$ such that $\lambda\overline{S}\colon \Hc_1\to \Hc_2$ is an isometry. 
\end{lemma}
\begin{proof}
This is  a direct consequence of \cite[Lemma~II.2.6 and Thm.~II.2.8]{Ne00}.
\end{proof}

In the present paper, unless otherwise mentioned, we  use the following notation.
	\begin{itemize}
		\item $G$ is a 
		Lie group;
		\item $\pi\colon G\to \Bc(\Hc)$ is a
		(continuous) unitary representation; 
		\item $\Hc^\infty:=\{v\in \Hc\mid \pi(\cdot)v\in\Ci(G,\Hc)\}$ is the Fr\'echet space of smooth vectors; 
		\item $\Hc^{-\infty}$ is the space of distribution vectors, 
		i.e., the space of continuous antilinear functionals on $\Hc^\infty$, endowed with the $w^*$-topology;
		\item $\langle\cdot,\cdot\rangle\colon\Hc^{-\infty}\times\Hc^\infty\to\CC$ 
		is the corresponding sesquilinear duality pairing, antilinear in its second variable, 
		which agrees with the scalar product of the Hilbert space $\Hc$;  
		\item we have the group representations 
		$$\pi^\infty\colon G\to\Lc(\Hc^\infty)\text{ and }
		\pi^{-\infty}\colon G\to\Lc(\Hc^{-\infty})$$ 
		satisfying $\pi^\infty(g)=\pi(g)\vert_{\Hc^\infty}$ and $\pi^{-\infty}(g):=(\pi^\infty(g^{-1}))'$ for every $g\in G$;
		\item $C_0^\infty(G)$ is the space of compactly supported smooth functions on $G$. 
		\end{itemize}

\begin{remark}
\label{triple}
\normalfont 
In the setting above, 
 the natural continuous injective linear maps 
\begin{equation}
	\label{triple_eq1}
	\Hc^\infty\hookrightarrow \Hc\hookrightarrow \Hc^{-\infty}
\end{equation}
intertwine the group representations $\pi^\infty$, $\pi$, and $\pi^{-\infty}$. 

Using integration with respect to 
a 
Haar measure of $G$, 
the unitary representation~$\pi$ gives rise to a Banach $*$-algebra representation 
$\pi\colon L^1(G)\to\Bc(\Hc)$. 
Moreover,  we have 
$\pi(C_0^\infty(G))\Hc^\infty\subseteq\Hc^\infty$
with dual  map 
$$\pi^{-\infty}\colon C_0^\infty(G) \to\Lc(\Hc^{-\infty}),\quad 
\pi^{-\infty}(\varphi):=(\pi(\varphi^*))'.
$$
Then 
\begin{equation}
\label{triple_eq2}
\pi^{-\infty}(C_0^\infty (G))\Hc^{-\infty}=\Hc^\infty.
\end{equation}
by \cite[Thm.~3.1 and 3.2]{DM78}. 
 \end{remark}

\section{The main result} 

\begin{theorem}
\label{unique}
Let $\pi\colon G\to\Bc(\Hc)$ be an irreducible unitary representation of a 
Lie group $G$. 
Let $\Xc\subseteq\Hc^{-\infty}$ be a $\CC$-linear subspace endowed with the structure of a complex Hilbert space satisfying the following hypotheses: 
\begin{enumerate}
[start=1,label={\rm(H\arabic*)}]
	\item\label{unique_item1} 
	the inclusion map $\Xc\hookrightarrow\Hc^{-\infty}$ is continuous; 
	\item\label{unique_item2}
	 $\pi^{-\infty}(G)\Xc\subseteq\Xc$; 
	\item\label{unique_item3} 
	for every $g\in G$ the operator $\pi_\Xc(g):=\pi^{-\infty}(g)\vert_\Xc\colon\Xc\to\Xc$ is an isometry.
\end{enumerate}
Then either $\Xc=\{0\}$ or $\Xc=\Hc$. 
\end{theorem}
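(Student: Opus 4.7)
The plan is to construct a $G$-equivariant linear map $T\colon\Hc_\infty\to\Xc$ via the Riesz representation theorem in $\Xc$, and then to invoke Lemma~\ref{isometry}. The subtle point will be converting the isometric embedding produced by that lemma into the set-theoretic equality $\Xc=\Hc$ inside $\Hc_{-\infty}$.

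By \ref{unique_item1}, for every $v\in\Hc_\infty$ the map $\xi\mapsto\langle\xi,v\rangle$ is a continuous linear functional on the Hilbert space $\Xc$, so Riesz supplies a unique $T(v)\in\Xc$ with
\begin{equation}\label{plan:riesz}
(\xi\mid T(v))_{\Xc}=\langle\xi,v\rangle\quad\text{for every }\xi\in\Xc,
\end{equation}
and $v\mapsto T(v)$ is $\CC$-linear. Next, hypotheses \ref{unique_item2}--\ref{unique_item3} make each $\pi_\Xc(g)$ a surjective isometry, hence unitary; strong continuity of $g\mapsto\pi_\Xc(g)\xi$ follows from the weak continuity furnished by \eqref{plan:riesz} together with norm preservation, so $\pi_\Xc\colon G\to\Bc(\Xc)$ is a unitary representation which integrates to an algebra representation $\pi_\Xc\colon\Sc(G)\to\Bc(\Xc)$ coinciding with $\pi_{-\infty}(\varphi)|_\Xc$.

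A direct computation from \eqref{plan:riesz}, together with $\pi_{-\infty}(\varphi)=(\pi(\varphi^*))'$ and $\pi_\Xc(\varphi)^*=\pi_\Xc(\varphi^*)$, yields
$$T\pi(\varphi)=\pi_\Xc(\varphi)T,\qquad\varphi\in\Sc(G),$$
so Lemma~\ref{isometry} extends $T$ to $\tilde T\colon\Hc\to\Xc$ of the form $\tilde T=\lambda_0 U$ with $\lambda_0\ge 0$ and $U$ isometric. If $\lambda_0=0$ then $\langle\xi,v\rangle=(\xi\mid 0)_{\Xc}=0$ for every $\xi\in\Xc$ and $v\in\Hc_\infty$, which forces $\Xc=\{0\}$.

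Assume $\lambda_0>0$. Substituting $\xi=T(u)$ in \eqref{plan:riesz} and combining with the identity $(T(u)\mid T(v))_{\Xc}=\lambda_0^2(u\mid v)_{\Hc}$ obtained in the proof of Lemma~\ref{isometry}, we find
$$\langle T(u),v\rangle=(T(u)\mid T(v))_{\Xc}=\lambda_0^2\langle u,v\rangle\qquad\text{for all }u,v\in\Hc_\infty,$$
so $T(u)=\lambda_0^2 u$ as elements of $\Hc_{-\infty}$. Passing to the limit via the continuous inclusion $\Xc\hookrightarrow\Hc_{-\infty}$, we obtain $\tilde T(u)=\lambda_0^2 u$ in $\Hc_{-\infty}$ for every $u\in\Hc$, so $\tilde T(\Hc)=\Hc$ as subsets of $\Hc_{-\infty}$. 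On the other hand $\tilde T(\Hc)$ is closed in $\Xc$ (being the image of a scalar multiple of an isometry) and contains $T(\Hc_\infty)$, which is dense in $\Xc$: any $\xi\in\Xc$ with $\xi\perp T(\Hc_\infty)$ satisfies $\langle\xi,v\rangle=0$ for all $v\in\Hc_\infty$ and hence vanishes. Therefore $\tilde T(\Hc)=\Xc$, and combining the two identities gives $\Xc=\Hc$ inside $\Hc_{-\infty}$. The main obstacle is precisely this last step — extracting a set equality from an abstract isometric intertwiner — which is resolved by exploiting the Riesz characterization \eqref{plan:riesz}.
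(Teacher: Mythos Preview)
Your argument is correct, and the first half---defining the Riesz adjoint $T\colon\Hc_\infty\to\Xc$ of the inclusion, establishing that $\pi_\Xc$ is a continuous unitary representation, proving the intertwining relation $T\pi(\varphi)=\pi_\Xc(\varphi)T$, and invoking Lemma~\ref{isometry}---is essentially the paper's argument (the paper calls your $T$ by $\iota$, proves its continuity via the Closed Graph Theorem, and obtains strong continuity of $\pi_\Xc$ through Lemma~\ref{cont} rather than via ``weak continuity + norm preservation'', but these are cosmetic differences). One small expository wrinkle: your weak-continuity claim tacitly uses the density of $T(\Hc_\infty)$ in $\Xc$, which you only state later; that density argument depends on nothing else and should simply be moved earlier.

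Where your proof genuinely diverges from the paper is in the passage from ``$\tilde T$ is a scalar multiple of an isometry'' to the set-theoretic equality $\Xc=\Hc$. The paper proceeds indirectly: it uses the smoothing property $\pi_{-\infty}(\Sc(G))\Hc_{-\infty}\subseteq\Hc_\infty$ from \eqref{triple_eq2} to show $\Xc_\infty\cap\Hc_\infty\ne\{0\}$, then applies algebraic irreducibility (Lemma~\ref{alg_irred}) to both $\pi$ and $\pi_\Xc$ to obtain $\Hc_\infty=\Xc_\infty$, and finally applies Lemma~\ref{isometry} a second time to the identity map. Your route is shorter and more elementary: feeding $\xi=T(u)$ back into the Riesz identity \eqref{plan:riesz} and combining with $(T(u)\mid T(v))_\Xc=\lambda_0^2\langle u,v\rangle$ immediately yields $T(u)=\lambda_0^2 u$ in $\Hc_{-\infty}$, whence $\Hc=\tilde T(\Hc)=\Xc$ by the closed-range/dense-range argument. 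This bypasses \eqref{triple_eq2}, Lemma~\ref{alg_irred}, and the second use of Lemma~\ref{isometry} entirely. The paper's detour does yield the extra information $\Hc_\infty=\Xc_\infty$ along the way, but for the bare conclusion of the theorem your observation that $T$ is literally scalar multiplication inside $\Hc_{-\infty}$ is the more economical path.
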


For the proof of the above theorem we 
need the following lemma.

\begin{lemma}
	\label{cont}
	Let $\Gamma$ be a topological group, 
	$\Xc$ a complex Hilbert space, $\rho\colon \Gamma\to \Bc(\Xc)$ a mapping, 
	and for every $x\in\Xc$ denote 
	$\rho^x\colon \Gamma\to\Xc$, $\rho^x(g):=\rho(g)x$. 
	Assume that the following conditions are satisfied: 
	\begin{enumerate}[{\rm(a)}]
		\item For every $g,h\in \Gamma$ we have $\rho(gh)=\rho(g)\rho(h)$ and moreover $\rho(\1)=\id_\Xc$. 
		\item For every $g\in \Gamma$ the operator $\rho(g)\colon \Xc\to\Xc$ is an isometry. 
		\item There exists a subset $A\subseteq\Xc$ that spans a dense linear subspace of $\Xc$ and $\rho^x\colon \Gamma\to \Xc$ is continuous for every $x\in A$. 
	\end{enumerate}
	Then $\rho$ is a continuous unitary representation  of the topological group $\Gamma$. 
\end{lemma}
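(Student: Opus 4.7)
The plan is to proceed in two stages: first observe that the hypotheses already force each $\rho(g)$ to be unitary, and then extend strong continuity from the total set $A$ to all of $\Xc$. For unitarity, hypothesis (a) gives $\rho(g)\rho(g^{-1}) = \rho(g^{-1})\rho(g) = \id_\Xc$, so each $\rho(g)$ is bijective; combined with the isometry hypothesis (b), the polarization identity shows $\rho(g)$ preserves the inner product, and hence every $\rho(g)$ is unitary.

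For continuity, I will first observe that the set
\[ \Dc := \{x \in \Xc \mid \rho^x \colon \Gamma \to \Xc \text{ is continuous}\} \]
is a linear subspace of $\Xc$, simply because each $\rho(g)$ is linear and finite linear combinations of continuous vector-valued maps are continuous. Hypothesis (c) gives $A \subseteq \Dc$, hence $\spa A \subseteq \Dc$, and by hypothesis $\spa A$ is dense in $\Xc$.

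The main step will then be to extend continuity from $\Dc$ to $\Xc$ by a standard $3\varepsilon$-argument. Given $x \in \Xc$, $h \in \Gamma$, and $\varepsilon > 0$, I will approximate $x$ by some $y \in \Dc$ with $\|x - y\| < \varepsilon/3$ and bound $\|\rho(g)x - \rho(h)x\|$ via the triangle inequality, inserting $\rho(g)y$ and $\rho(h)y$ between the two ends. The key point that makes this argument work is the uniform operator bound $\|\rho(g)\| = 1$ from hypothesis (b), which controls $\|\rho(g)(x-y)\|$ and $\|\rho(h)(y-x)\|$ independently of $g$; continuity of $\rho^y$ at $h$ then handles the middle term on a suitable neighborhood of $h$. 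I do not anticipate any genuine obstacle: the argument is the classical fact that strong continuity on a total set, combined with a uniform operator-norm bound, implies strong continuity on all of $\Xc$, and hypothesis (b) supplies both the uniform bound and unitarity at once.
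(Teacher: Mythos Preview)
Your proposal is correct and follows essentially the same route as the paper. The only cosmetic difference is that, after introducing the same subspace $\Dc$ (denoted $\Xc_0$ in the paper), the paper observes that the isometry hypothesis gives $\Vert\rho^x(g)-\rho^y(g)\Vert=\Vert x-y\Vert$ for all $g$, so $\Dc$ is closed in $\Xc$ and hence equals $\Xc$; this is exactly your $3\varepsilon$ argument packaged as a closedness statement.
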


\begin{proof}
	For every $g\in \Gamma$ we have $\id_\Xc=\rho(\1)=\rho(g)\rho(g^{-1})$, 
	hence $\rho(g)\in\Bc(\Xc)$ is a surjective isometry.
	that is, a unitary operator. 
	It remains to prove that $\rho^x\colon \Gamma\to \Xc$ is continuous for every $x\in \Xc$. 
	(See e.g., \cite[Lemma 5.2]{Ne10}.)
	To this end we consider the set $\Xc_0:=\{x\in\Xc\mid \rho^x\in\Cc(\Gamma,\Xc)\}$. 
	It is clear that $\Xc_0$ is a linear subspace of $\Xc$ and $A\subseteq\Xc_0$, hence $\Xc_0$ is dense in $\Xc$. 
	On the other hand, for arbitrary $x,y\in\Xc$ and $g\in \Gamma$ we have $\Vert \rho^x(g)-\rho^y(g)\Vert =\Vert \rho(g)x-\rho(g)y\Vert=\Vert x-y\Vert$. 
	This directly implies that $\Xc_0$ is closed in $\Xc$, hence $\Xc_0=\Xc$. 
\end{proof}

\begin{proof}[Proof of Theorem~\ref{unique}]
We assume $\Xc\ne\{0\}$. 
For every $v\in\Hc^\infty$ we have $\overline{\langle\cdot,v\rangle}\in(\Hc^{-\infty})'$, 
hence $\overline{\langle\cdot,v\rangle}\vert_\Xc\in\Xc'$ by \ref{unique_item1}. 
Since $\Xc$ is a complex Hilbert space, there exists a uniquely determined vector $\iota(v)\in\Xc$ satisfying 
\begin{equation}
\label{unique_proof_eq1}
\langle\cdot,v\rangle\vert_\Xc=\langle\cdot,\iota(v)\rangle_\Xc,
\end{equation} 
where $\langle\cdot,\cdot\rangle_\Xc$ denotes the scalar product of $\Xc$. 
We thus obtain a linear operator 
$$\iota\colon \Hc^\infty\to\Xc,$$
such that 
\begin{equation*} 
\iota(v) = v \quad \text{for } v \in \Hc^\infty \cap \Xc.
\end{equation*}
The equality \eqref{unique_proof_eq1} shows that $\iota$ is the adjoint of the inclusion map $\Xc\hookrightarrow\Hc^{-\infty}$. 
The range of the operator $\iota$ is dense in $\Xc$ since the above inclusion map is injective.
In fact, if $x_0\in\Xc$ satisfies $x_0\perp \iota(\Hc^\infty)$ in $\Xc$ 
then, by \eqref{unique_proof_eq1}, 
we have $\langle x_0,v\rangle=0$ for every $v\in\Hc^\infty$, that is, $x_0=0$.

In order to prove that $\iota$ is a continuous operator, one can use the closed graph theorem since $\Hc^\infty$ is a Fr\'echet space and $\Xc$ is a Hilbert space. 
Specifically, if $\lim\limits_{n\to\infty}v_n=v$ in $\Hc^\infty$ and $\lim\limits_{n\to\infty}\iota(v_n)=x$ in $\Xc$, 
then $\iota(v)=x$ by \eqref{unique_proof_eq1}. 

For every $g\in G$, $v\in\Hc^\infty$ and $x\in\Xc$ we have 
\begin{align*}
\langle x,\iota(\pi^\infty(g)v)\rangle_\Xc
&=\langle x,\pi^\infty(g)v\rangle
=\langle \pi^{-\infty}(g^{-1})x, v\rangle
=\langle \pi_\Xc(g^{-1})x, v\rangle \\
& =\langle \pi_\Xc(g^{-1})x,\iota(v)\rangle_\Xc 
=\langle x,\pi_\Xc(g)\iota(v)\rangle_\Xc, 
\end{align*}
hence 
\begin{equation}
	\label{unique_proof_eq2}
(\forall g\in G, v\in\Hc^\infty)\quad 	\iota(\pi^\infty(g)v)=\pi_\Xc(g)\iota(v).
\end{equation} 
Since $\iota\colon\Hc^\infty\to\Xc$ is a continuous linear operator
and the mapping $G\ni g\mapsto\pi^\infty(g)v\in\Hc^\infty$ is smooth,  
as $\pi\colon G\to\Bc(\Hc)$ is a continuous representation 
(cf. \cite[Prop. 1.2]{Po72}), 
it also follows by \eqref{unique_proof_eq2} that, for every $x\in\iota(\Hc^\infty)\subseteq\Xc$, the mapping $\rho_\Xc^x\colon G\to\Xc$, $\pi_\Xc^x(g):=\pi_\Xc(g)x$, is smooth and in particular continuous. 
We proved above that $\iota(\Hc^\infty)$ is dense in $\Xc$, 
hence by~\ref{unique_item3}, Lemma~\ref{cont} is applicable and 
ensures that $\pi_\Xc\colon G\to\Bc(\Xc)$ is a continuous unitary representation.
Moreover,~\eqref{unique_proof_eq2} shows that  $\iota(\Hc^\infty)\subseteq \Xc^\infty$, 
hence
\begin{equation}\label{unique_proof_eq2.1}
\Hc^\infty \cap \Xc \subseteq \Xc^\infty.
\end{equation}
 On the other hand, we recall from \eqref{triple_eq2} that  
$\pi^{-\infty}(C_0^\infty(G)))\Hc^{-\infty}=\Hc^\infty$;
in particular, 
$\pi_{\Xc}(C_0^\infty(G))\Xc\subseteq\Hc^\infty$.
Then Dixmier-Malliavin's theorem \cite[Thm.~3.2]{DM78} for the continuous representation 
$\pi_\Xc$ shows that $\Xc^\infty\subseteq \Hc^\infty$ which, along with 
\eqref{unique_proof_eq2.1} gives
\begin{equation}\label{unique_proof_eq2.2}
\Xc^\infty=\Hc^\infty\cap \Xc.
\end{equation} 
The subspace $\Xc^\infty$ is then contained in $\Hc$ and dense in $\Xc$ . 
Moreover
since it is invariant under $\pi(g)$, $g\in G$, and the representation $\pi\colon G \to \Bc(\Hc)$ is irreducible, it follows that $\Xc^\infty$  is dense in $\Hc$, as well.

We  
now regard 
the operator $\iota$ above as an unbounded operator defined on $\Hc$ with domain $\Hc^\infty$. 
A vector $x\in \Xc$ belongs to the domain of the adjoint $\iota^*$ of $\iota$ if and only if the map 
$v\mapsto \langle x, \iota(v)\rangle_\Xc=\langle x,v\rangle
 $
is continuous in the norm of $\Hc$. 
Since 
the duality pairing $\langle \cdot ,\cdot\rangle$
 agrees with the scalar product of $\Hc$, we obtain that  
$\Xc^\infty$ is contained in the domain of $\iota^*$. 
It follows that $\iota^*$ is densely defined, therefore $\iota$ is a closable operator. 
Using now \eqref{unique_proof_eq2} and Lemma~\ref{Schur}
we get that there 
are 
an isometry $\iota_0\colon \Hc \to \Xc$ and a constant $\lambda_0>0$ 
such that $\iota = \lambda_0 \iota_0\vert_{\Hc^\infty}$. 
As the range of $\iota$, hence of $\iota_0$, is dense in $\Xc$, 
it follows that the operator $\iota_0\colon \Hc\to\Xc$ is unitary. 
Moreover, since $\iota\vert_{\Xc^\infty}$ is the identity map, and 
$\Xc^\infty$ is dense both in $\Xc$ and $\Hc$, we get that $\Hc=\Xc$ as sets and 
the scalar products of these Hilbert spaces are a suitable scalar multiple of each other. 
\end{proof}

\begin{remark}\label{means}
\normalfont 
In the setting of Theorem~\ref{unique}, we denote by $G_d$ the group $G$ endowed with its discrete topology.  
If we assume that $G_d$ is amenable
and  \ref{unique_item2} holds true, 
then \ref{unique_item3} is equivalent to the following, seemingly weaker, condition: 
\begin{enumerate}
	[start=4,label={\rm(H\arabic*)}]
	\item\label{unique_item4} 
	there exist a constant $C>0$ and an equivalent Hilbert space norm $\Vert\cdot\Vert'_\Xc$ such that for every $g\in G$ and $x\in\Xc$ we have $\Vert\pi_\Xc(g)x\Vert'_\Xc\le C\Vert x\Vert'_\Xc$.  
\end{enumerate}
In fact, clearly \ref{unique_item3}$\Rightarrow$\ref{unique_item4} with $\Vert\cdot\Vert'_\Xc=\Vert\cdot\Vert_\Xc$ and $C=1$. 
For the converse implication, 
let us assume \ref{unique_item4}. 
Since $G_d$ is amenable, by 
 \cite[Thm. 3.4.1]{Gr69} and its proof, one can define an equivalent Hilbert space norm $\Vert\cdot\Vert_\Xc$ on $\Xc$ for which the representation $\pi_\Xc\colon G_d\to\Bc(\Xc)$ is unitary, that is, \ref{unique_item3} holds true. 

We note that if the Lie group $G$ is connected, the condition that $G_d$ is amenable is equivalent with $G$ being solvable.  (See \cite[Thm.~3.9]{Pa88}.)

In the special case when $G$ is a Heisenberg group and $\pi$ is a Schr\"odinger representation as in the proof of Corollary~\ref{RaToVi25_Thm0.3} below, the above remark provides a new approach to the result of \cite[Lemma 1.3]{RaToVi25}.
\end{remark}

We are now in a position to provide a new proof of \cite[Thm. 0.3]{RaToVi25} as follows. 

\begin{corollary}
\label{RaToVi25_Thm0.3}
Let $\Xc\hookrightarrow\Sc'(\RR^d)$ be a continuous inclusion of a nonzero complex Hilbert space into the space of tempered distributions on $\RR^d$, satisfying the following condition: 
 there exists a constant $C_0>0$ such that for all $f\in\Xc$ and $x,\xi\in\RR^d$ we have $f(\cdot-x)\ee^{\ie\langle \cdot,\xi\rangle}\in\Xc$ and $\Vert f(\cdot-x)\ee^{\ie\langle \cdot,\xi\rangle}\Vert_\Xc\le C_0\Vert f\Vert_\Xc$. 

Then $\Xc=L^2(\RR^d)$ as vector spaces, and the norms $\Vert\cdot\Vert_\Xc$ and $\Vert\cdot\Vert_{L^2(\RR^d)}$ are equivalent. 
\end{corollary}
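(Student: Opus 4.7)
The plan is to specialise Theorem~\ref{unique} to the Schr\"odinger representation of the Heisenberg group. Let $G$ be the $(2d+1)$-dimensional Heisenberg group, coordinatised as $\RR^d\times\RR^d\times\RR$ with the standard product, and let $\pi\colon G\to\Bc(L^2(\RR^d))$ be its Schr\"odinger representation: up to conventional sign choices, the action of a group element $(x,\xi,t)$ on $f\in L^2(\RR^d)$ consists of translation by $x$, modulation by $\xi$, multiplied by a central unimodular character $\ee^{\ie t}$. This $\pi$ is irreducible by Stone--von Neumann, the smooth vectors are $\Hc_\infty=\Sc(\RR^d)$, and the distribution vectors $\Hc_{-\infty}$ are canonically identified with $\Sc'(\RR^d)$ (via complex conjugation, which mediates between antilinear and linear functionals).

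I then check the hypotheses of Theorem~\ref{unique} for $\Xc\subseteq\Hc_{-\infty}=\Sc'(\RR^d)$. Hypothesis \ref{unique_item1} is exactly the continuous inclusion assumed in the corollary. For every $g=(x,\xi,t)\in G$ the operator $\pi_{-\infty}(g)$ acts on a tempered distribution by the composition of translation by $x$, modulation by $\xi$, and scalar multiplication by $\ee^{\ie t}$; the corollary's hypothesis therefore gives $\pi_{-\infty}(G)\Xc\subseteq\Xc$, which is \ref{unique_item2}, together with
\[
  \Vert \pi_\Xc(g)f\Vert_\Xc\le C_0\Vert f\Vert_\Xc
  \quad\text{for all }g\in G,\ f\in\Xc,
\]
because multiplication by a unimodular scalar preserves any norm on a complex Hilbert space; this is \ref{unique_item4}. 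By Remark~\ref{means}, I can then replace $\Vert\cdot\Vert_\Xc$ by an equivalent Hilbert norm $\Vert\cdot\Vert'_\Xc$ under which \ref{unique_item3} also holds.

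Since $\Xc\ne\{0\}$ by hypothesis, Theorem~\ref{unique} applied with the norm $\Vert\cdot\Vert'_\Xc$ concludes that $\Xc=\Hc=L^2(\RR^d)$ as vector spaces, with $\Vert\cdot\Vert'_\Xc$ a positive scalar multiple of $\Vert\cdot\Vert_{L^2(\RR^d)}$. Composing this with the equivalence between $\Vert\cdot\Vert_\Xc$ and $\Vert\cdot\Vert'_\Xc$ from Remark~\ref{means} yields the desired equivalence of $\Vert\cdot\Vert_\Xc$ and $\Vert\cdot\Vert_{L^2(\RR^d)}$. The main expected obstacle is purely a matter of book-keeping: one has to make the identifications $\Hc_\infty=\Sc(\RR^d)$, $\Hc_{-\infty}=\Sc'(\RR^d)$ and the description of $\pi_{-\infty}$ precise enough that the translation--modulation hypothesis of the corollary matches $\pi_{-\infty}(G)$-invariance; everything then works because the central action on $L^2(\RR^d)$ is by unimodular scalars and is therefore invisible to both the invariance statement and the norm bound.
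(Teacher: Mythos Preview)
Your proof is correct and follows essentially the same approach as the paper's own proof: specialise Theorem~\ref{unique} to the Schr\"odinger representation of the Heisenberg group, identify $\Hc_\infty=\Sc(\RR^d)$ and $\Hc_{-\infty}=\Sc'(\RR^d)$, verify \ref{unique_item1}, \ref{unique_item2}, and \ref{unique_item4}, and then invoke Remark~\ref{means} together with Theorem~\ref{unique}. Your write-up is somewhat more explicit about the role of the unimodular central character and the passage between the two norms, but the argument is the same.
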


\begin{proof}
We consider  
the Heisenberg group $G=H_d=\RR^d\times\RR^d\times\RR$ 
with its group operation 
$$(x,\xi,t)\cdot(y,\eta,s):=(x+y,\xi+\eta,t+s+(\langle\xi,y\rangle-\langle\eta,x\rangle)/2)$$
and the Schr\"odinger representation 
$\pi\colon H_d\to \Bc(L^2(\RR^d))$ given by 
$$(\pi(x,\xi,t)f)(y)=\ee^{\ie(\langle\xi,y\rangle+t+\langle\xi,x\rangle/2)}f(y+x).$$
(See e.g., \cite[Ex. 2.2.6]{CG90}.)
Then we have $\Hc^\infty=\Sc(\RR^d)$ hence $\Hc^{-\infty}=\Sc'(\RR^d)$. 
Thus the assumption on $\Xc$ implies that the hypotheses \ref{unique_item1} and \ref{unique_item2} in Theorem~\ref{unique}, as well as \ref{unique_item4} in Remark~\ref{means} are satisfied. 
Now the assertion follows by Theorem~\ref{unique} along with Remark~\ref{means}. 
\end{proof}

\section{A result for nilpotent Lie groups}\label{nilpotent}

We now assume that $G$ is a 1-connected nilpotent Lie group
and $\Sc(G)$ is the associative Fr\'echet algebra of Schwartz functions on $G$ with the convolution product, hence $\Sc(G)$ is a $*$-subalgebra of $L^1(G)$. 
Let  $\pi\colon G\to \Bc(\Hc)$ be an irreducible (continuous) unitary representation. 
Using the Banach $*$-algebra representation 
$\pi\colon L^1(G)\to\Bc(\Hc)$, 
 we have 
$\pi(\Sc(G))\Hc^\infty\subseteq\Hc^\infty$, 
and this gives rise to the algebra representation 
$$\pi^\infty\colon \Sc(G)\to\Lc(\Hc^\infty).$$
Also, the map 
$$\pi^{-\infty}\colon \Sc(G)\to\Lc(\Hc^{-\infty}),\quad 
\pi^{-\infty}(\varphi):=(\pi(\varphi^*))'
$$
is an algebra representation.  

The maps in \eqref{triple_eq1} 
intertwine the representations $\pi^\infty$, $\pi\vert_{\Sc(G)}$, $\pi^{-\infty}$ of the associative $*$-algebra $\Sc(G)$ 
and $\pi^{-\infty}(\Sc(G))\Hc^{-\infty}\subseteq\Hc^\infty$. 
By \eqref{triple_eq2}  we  then have 
\begin{equation}
\label{triple_eq2bis}
\pi^{-\infty}(\Sc(G))\Hc^{-\infty}=\Hc^\infty. 
\end{equation}

\begin{lemma}
\label{alg_irred}
If $\Vc\subseteq\Hc^\infty$ is a $\CC$-linear subspace satisfying $\pi^\infty(\Sc(G))\Vc\subseteq\Vc$ then  either $\Vc=\{0\}$ or $\Vc=\Hc^\infty$, that is, the representation 
$\pi^\infty\colon \Sc(G)\to\Lc(\Hc^\infty)$ is algebraically irreducible.
\end{lemma}

\begin{proof}
See \cite[Cor. 3.4.1]{Ho77}, 
and also \cite[Cor. 3.4]{dC87} 
and \cite[Thm. 2.9]{BeBeGa18}.
\end{proof}

Before continuing the preparations for 
the proof of Theorem~\ref{unique_nilp}, 
we derive the following simple result that has a similar flavour and may hold an independent interest.

\begin{proposition}
	\label{w_irred} In the conditions above, 
	if $\Yc\subseteq \Hc^{-\infty}$ is a $w^*$-closed linear subspace with  $\pi^{-\infty}(\Sc(G))\Yc\subseteq\Yc$, then we have either $\Yc=\{0\}$ or $\Yc=\Hc^{-\infty}$. 
\end{proposition}

\begin{proof}
	Let $\Yc^\perp:=\{v\in\Hc^\infty\mid\langle\Yc,v\rangle=\{0\}\}$, 
	which is a closed linear subspace of $\Hc^\infty$. 
	Since $\Yc\subseteq \Hc^{-\infty}$ is a $w^*$-closed linear subspace, we obtain 
	a topological linear isomorphism  
	\begin{equation}
		\label{w_irred_proof_eq1}
		\Yc\simeq(\Hc^\infty/\Yc^\perp)'. 
	\end{equation}
	On the other hand, the hypothesis $\pi^{-\infty}(\Sc(G))\Yc\subseteq\Yc$ implies 
	$\pi^\infty(\Sc(G))\Yc^\perp\subseteq\Yc^\perp$ 
	hence, by Lemma~\ref{alg_irred}, we have either $\Yc^\perp=\{0\}$ or $\Yc^\perp=\Hc^\infty$. 
	Now the assertion follows by \eqref{w_irred_proof_eq1}. 
\end{proof}

The next lemma is a more precise form of \cite[Cor.~3.4.2]{Ho77}. 
We give below its short proof. 
For this statement we recall that $\Sc(G)$ is a $*$-subalgebra of the Banach algebra $L^1(G)$, and by $*$-representation of $\Sc(G)$ on a Hilbert space $\Xc$ we mean any $*$-morphism $\sigma\colon \Sc(G)\to\Bc(\Xc)$ without any assumptions on continuity. 

\begin{lemma}\label{isometry}
Let $\pi$ be as above and $\sigma\colon \Sc(G)\to \Bc(\Xc)$ be a $*$-representation on a Hilbert space $\Xc$. 
Assume that the linear map $T\colon \Hc^\infty\to \Xc$ satisfies
$$ T\pi(\varphi) = \sigma (\varphi) T \quad \text{for all } \varphi \in \Sc(G).$$
Then $T$ extends to a bounded operator on $\Hc$ that is  a scalar multiple of an isometry. 
\end{lemma}

\begin{proof}
Let $v_1, v_2\in \Hc^\infty\setminus \{0\}$ be such that 
$\dual{v_1}{v_2}=0$.
The orthogonal projection $P:=\dual{\cdot}{v_1}{v_1}$ belongs to $\Bc(\Hc)^\infty$ by \cite[Ex. 3.1]{BeBe10}. 
By \cite[Thm.~3.4]{Ho77}, there is $\varphi\in \Sc(G)$ with 
$P= \pi(\varphi)$, hence $P=P^*=P^2 = \pi(\psi)$ where $\psi:=\varphi^*\ast \varphi\in\Sc(G)$. 
Then, by hypothesis,  $\sigma(\psi)T=TP=\dual{\cdot}{v_1}Tv_1$, hence
$$ \sigma(\psi) T v_1= Tv_1 \; \; \text{and}\; \; \sigma (\psi) Tv_2=0.$$
It follows that $Tv_1$, respectively $Tv_2$, are eigenvectors for the eigenvalues $1$, respectively $0$, of the operator $\sigma(\psi)\in\Bc(\Xc)$. 
Here $\sigma(\psi)=\sigma(\varphi)^*\sigma(\varphi)$ 
since the mapping $\sigma\colon \Sc(G)\to\Bc(\Xc)$ is a $*$-representation, 
hence $\sigma(\psi)$ is a self-adjoint operator, and then its eigenvectors $Tv_1$ and  $Tv_2$ are orthogonal. 

Therefore we have obtained that for $v_1, v_2\in \Hc^\infty$, 
\begin{equation}\label{isom1}
 \dual{v_1}{v_2}=0 \; \Longrightarrow  \; \dual{Tv_1}{T v_2}=0.
\end{equation}
Let now $v, w\in \Hc^\infty$ be arbitrary with $\Vert v\Vert =\Vert w\Vert=1$. 
Set 
$ v_2 = w- \dual{w}{v} v$.
Then $\dual{v}{v_2}=0$  hence, by \eqref{isom1}, $\dual{Tv}{Tv_2}=0$ , that is, 
 \begin{equation}\label{isom2}
\dual{Tv}{Tw}= \dual{v}{w}\Vert Tv\Vert^2.
\end{equation} 
Interchanging the roles of $w$ and $v$, we obtain  
$\dual{v}{w}\Vert Tv\Vert^2= \dual{v}{w}\Vert Tw\Vert^2$.
It follows that 
$$ \Vert Tv\Vert= \Vert Tw\Vert  \quad\text{if  }\dual{v}{w}\ne 0.$$
If $\dual{v}{w}= 0$ and we put
$w_\epsilon=(w+\epsilon v)/\Vert w+\epsilon v\Vert$ for every $\epsilon>0$, then 
$\dual{v}{w_\epsilon}\ne 0$. Using the above equality we get that 
$$ \Vert Tv\Vert =\Vert w+\epsilon v\Vert^{-1}\Vert T(w+\epsilon v)\Vert =\Vert w+\epsilon v\Vert^{-1}  \Vert Tw+\epsilon Tv\Vert.$$
Letting $\epsilon\to 0$, we get that 
$$ \Vert Tv\Vert= \Vert Tw\Vert  \quad\text{for every} \quad v, w\in \Hc^\infty, \; \Vert v\Vert =\Vert w\Vert =1.$$
Thus $\lambda_0:=\Vert Tv\Vert$ is independent on $v\in\Hc^\infty$ with $\Vert v\Vert=1$. 
Then by \eqref{isom2}
we get 
\begin{equation}\label{isom3}
\dual{T v}{T w }= \lambda_0^2 \dual{v}{w}
 \end{equation} 
for every $v, w\in \Hc^\infty$.
Since $\Hc^\infty$ is dense in $\Hc$, \eqref{isom3} shows that $T$ extends 
as a bounded operator on $\Hc$, $\eqref{isom3}$ holds for $v, w\in \Hc$, hence the extension of $T$ is a scalar multiple of an isometry.  
\end{proof}

\begin{theorem}
\label{unique_nilp}
Let $\pi\colon G\to\Bc(\Hc)$ be an irreducible unitary representation of a 1-connected, nilpotent Lie group $G$. 
Let $\Xc\subseteq\Hc^{-\infty}$ be a $\CC$-linear subspace endowed with the structure of a complex Hilbert space satisfying the hypothesis~\ref{unique_item1} and, additionally,  
\begin{enumerate}
[start=5,label={\rm(H\arabic*)}]
	\item\label{unique_item5}
	 $\pi^{-\infty}(\Sc(G))\Xc\subseteq\Xc$. 
\end{enumerate}
Then either $\Xc=\{0\}$ or $\Xc=\Hc$. 
\end{theorem}

\begin{proof}
We start by the preliminary remark that for any unitary representation $\pi$ of an arbitrary Lie group~$G$, if $0\ne x\in\Hc^{-\infty}$, then there exists $v\in\Hc^\infty$ with $\langle x,v\rangle\ne 0$ hence for a suitable $\varphi\in C_0^\infty(G)$ we have $\langle x,\pi(\varphi)v\rangle\ne 0$, which implies $\pi^{-\infty}(C_0^\infty(G))x\ne \{0\}$. 

We now come back to the proof and assume $\Xc\ne\{0\}$. 
Then, by the above remark along with \eqref{triple_eq2bis} and \ref{unique_item5}, we have 
\begin{equation*}
	\{0\}\ne \pi^{-\infty}(\Sc(G))\Xc\subseteq \Xc\cap\Hc^\infty.
\end{equation*}
On the other hand, the $\CC$-linear space $\pi^{-\infty}(\Sc(G))\Xc$ is invariant to $\pi^{-\infty}(\Sc(G))$ since $\pi^{-\infty}\colon \Sc(G)\to\Lc(\Hc^{-\infty})$ is an algebra representation. 
Since the inclusion $\Hc^\infty\hookrightarrow\Hc^{-\infty}$ intertwines the representations $\pi^\infty$ and $\pi^{-\infty}$ of the algebra $\Sc(G)$, 
an application of Lemma~\ref{alg_irred} then shows that $\pi^{-\infty}(\Sc(G))\Xc=\Hc^\infty$, 
which further implies $\Hc^\infty\subseteq\Xc$. 
Since the representation $\pi$ of the nilpotent Lie group $G$ is irreducible, 
it follows by 
Lemma~\ref{isometry} along with hypothesis \ref{unique_item5} that $\Hc\subseteq \Xc$. 

However, 
as in the proof of Theorem~\ref{unique}, we have the continuous linear operator $\iota\colon\Hc^\infty\to\Xc$ given by 
\begin{equation*}
	(\forall v\in\Hc^\infty, x\in\Xc)\quad 
	\langle x,v\rangle =\langle x,\iota(v)\rangle_\Xc. 
\end{equation*} 
with $\iota(\Hc^\infty)$ dense in $\Xc$. 
Since $\iota(v)=v$ for $v\in \Hc^\infty\cap\Xc$ and we established above that $\Hc\subseteq \Xc$, we actually obtain $\Hc= \Xc$.
\end{proof}

\subsection*{Acknowledgment} 
We wish to thank the Referee for the remarks on our paper and for pointing out \cite[Thm.~2.8]{Ne00}, and indicating how to 
considerably improve the paper, 
by extending of Theorem~\ref{unique} to general Lie groups, 
as well as further developments along the lines of Theorem~\ref{unique_nilp}. 

We also thank Dr. Shubham Bais for kindly providing an electronic copy of \cite{BaMoPi24}.


\begin{thebibliography}{1000000000000}
	
		\bibitem[BaMoPi24]{BaMoPi24}
	\textsc{S.R.~Bais, P.~Mohan, D. Venku Naidu}, 
	A characterization of translation and modulation invariant Hilbert space of tempered distributions. 
	\textit{Arch. Math. (Basel)} \textbf{122} (2024), no. 4, 429--436.
	
		\bibitem[BeBe10]{BeBe10}
	\textsc{I.~Belti\c t\u a, D.~Belti\c t\u a}, 
	Smooth vectors and Weyl-Pedersen calculus for representations of nilpotent Lie groups. 
	\textit{Ann. Univ. Buchar. Math. Ser.} \textbf{1}(\textbf{LIX}) (2010), no. 1, 17--46.
	
	\bibitem[BeBe11]{BeBe11}
	\textsc{I.~Belti\c t\u a, D.~Belti\c t\u a}, 
	Modulation spaces of symbols for representations of nilpotent Lie groups. 
	\textit{J. Fourier Anal. Appl.} \textbf{17} (2011), no. 2, 290--319.
	
	\bibitem[BeBeGa18]{BeBeGa18}
	\textsc{I.~Belti\c t\u a, D.~Belti\c t\u a, J.E.~Gal\'e},  
	Transference for Banach space representations of nilpotent Lie groups. Part 1. Irreducible representations.  
	\textit{Proc. Amer. Math. Soc.} \textbf{146} (2018), no. 12, 5065--5075.
	

	
	\bibitem[dC87]{dC87}
	\textsc{F.~du Cloux}, 
	Jets de fonctions diff\'erentiables sur le dual d'un groupe de Lie nilpotent. \textit{Invent. Math.} \textbf{88} (1987), no. 2, 375--394.
	
	\bibitem[CG90]{CG90} 
	\textsc{L.J.~Corwin, F.P.~Greenleaf}, 
	\textit{Representations of Nilpotent Lie Groups and Their Applications}. 
	Part~I.\ Basic theory and examples. 
	Cambridge Studies in Advanced Mathematics, 18. 
	Cambridge University Press, Cambridge, 1990. 
	
	\bibitem[DM78]{DM78}
	\textsc{J.~Dixmier, P.~Malliavin}, 
	Factorisations de fonctions et de vecteurs ind\'efiniment diff\'erentiables. 
	\textit{Bull. Sci. Math. (2)} \textbf{102 }(1978), no. 4, 307--330.
	
	\bibitem[Gr69]{Gr69}
	\textsc{F.P.~Greenleaf}, 
	\textit{Invariant means on topological groups and their applications}. 
	Van Nostrand Mathematical Studies, No. 16. Van Nostrand Reinhold Co., New York-Toronto-London, 1969. 
	
	\bibitem[Ho77]{Ho77}
	\textsc{R.E.~Howe},  
	On a connection between nilpotent groups and oscillatory integrals associated to singularities. 
	\textit{Pacific J. Math.} \textbf{73} (1977), no. 2, 329--363.
	

	\bibitem[Ne00]{Ne00}
\textsc{K.-H.~Neeb}, 
\textit{Holomorphy and convexity in Lie theory}. 
De Gruyter Expositions in Mathematics, 28. Walter de Gruyter \& Co., Berlin, 2000.
	
	\bibitem[Ne10]{Ne10}
	\textsc{K.-H.~Neeb}, 
	On differentiable vectors for representations of infinite dimensional Lie groups. \textit{J. Funct. Anal.} \textbf{259} (2010), no. 11, 2814--2855.
	
	\bibitem[Pa88]{Pa88}
	\textsc{A.L.T.~Paterson}, 
	\textit{Amenability.}
	Mathematical Surveys and Monographs 29. American Mathematical Society, Providence, RI, 1988.
	
	
	\bibitem[Po72]{Po72}
	\textsc{N.S.~Poulsen}, 
	On $C\sp{\infty}$-vectors and intertwining bilinear forms for representations of Lie groups. 
	\textit{J. Functional Analysis} \textbf{9} (1972), 87--120.
	
	\bibitem[RaToVi25]{RaToVi25}
	\textsc{P.K.~Ratnakumar, J.~Toft, J.~Vindas}, 
	Non-isometric translation and modulation invariant Hilbert spaces. 
	\textit{J. Math. Anal. Appl.} \textbf{550} (2025), no. 1, Paper No. 129530, 13 pp.
	
	\bibitem[Sch64]{Sch64}
	\textsc{L.~Schwartz},
	 Sous-espaces hilbertiens d'espaces vectoriels topologiques et noyaux associ\'es (noyaux reproduisants). 
	 \textit{J. Analyse Math.} \textbf{13} (1964), 115--256.
	
	\bibitem[ToGuMaRa21]{ToGuMaRa21}
	\textsc{J.~Toft, A.~Gumber, R.~Manna, P.K.~Ratnakumar}, 
	Translation and modulation invariant Hilbert spaces. 
	\textit{Monatsh. Math.} \textbf{196} (2021), no. 2, 389--398.
	
\end{thebibliography}
\end{document}